\documentclass[11pt]{article}

\overfullrule = 0pt
\usepackage{amsmath,amssymb,amsthm,amscd,epsfig}
\topmargin -0.5in
\textheight 9.0in

\textwidth 6.5in

\oddsidemargin 0.0in

\evensidemargin 0.0in

\newcommand{\vp}{\varepsilon}

\theoremstyle{plain}

\newtheorem{thm}{Theorem}

\newtheorem*{unthm}{Theorem}

\newtheorem{pro}{Proposition}

\theoremstyle{definition}

\newtheorem{defn}{Definition}

\theoremstyle{remark}

\begin{document}

\title{Isomorphism classes for certain expanding maps and their group extensions}

\author{Eugen Mihailescu}
\date{}
\maketitle

\begin{abstract}

We prove that expanding toral endomorphisms, together with their respective Lebesgue (Haar) measure are isomorphic to
 1-sided Bernoulli shifts, and are thus as far from invertible as possible, from a measure-theoretic point of view.
  This result is then extended to systems of perturbations of expanding toral endomorphisms, together with their
  respective measures of maximal entropy. Also we study group extensions of expanding toral endomorphisms, in particular
   probabilistic systems on skew products with (other) tori; and prove that under certain, not too restrictive conditions
   on the extension cocycle, these skew products are isomorphic to 1-sided Bernoulli shifts as well.

\end{abstract}

\textbf{Mathematics Subject Classification 2000:} 37A35, 37A50,
37C40.

\textbf{Keywords:} Toral endomorphisms, 1-sided Bernoullicity,
measure-theoretic isomorphisms, group extensions, cocycles,
 weak mixing.

\section{Introduction and outline of main results.}\label{sec1}

From the point of view of ergodic properties, endomorphisms of
Lebesgue spaces behave in a very different way from automorphisms.
The study of measure-preserving endomorphisms presents also many
different methods and ideas than that of
diffeomorphisms/automorphisms (for example \cite{HR}, \cite{L},
\cite{Ma-85}, \cite{PW}, \cite{Pa-96}, \cite{S}, \cite{M-MZ},
\cite{MU-BLMS}, etc.)

Even for 1-sided Bernoulli shifts, the problem of classification up to measure-theoretic isomorphism is very far from the similar problem for 2-sided Bernoulli maps. For 2-sided Bernoulli shifts, it is well known that Ornstein showed they can be classified by measure-theoretic entropy alone (for example \cite{Ma}, \cite{OW}), while for 1-sided shifts this is not at all the case; in fact as Parry and Walters (\cite{PW}, \cite{Wa}) showed,  endomorphisms $T$ on Lebesgue spaces $(X, \mathcal{B}, \mu)$ cannot be classified even by a combination of entropy, Jacobian and the sequence of decreasing algebras $\{T^{-n}\mathcal{B}\}_{n\ge 0}$.

The existence of multiple preimages of a point and the possibly different behavior of consecutive sums on different
 prehistories of points, imply that the dynamical and ergodic properties of endomorphisms are different in results and in
  techniques, than those of automorphisms.
Endomorphisms were studied under various aspects, both from the
point of view of smooth dynamical behavior in the
expanding/hyperbolic case, as well as from the point of view of
ergodic/statistical properties by several authors, for instance
\cite{CP}, \cite{HR}, \cite{K}, \cite{L}, \cite{Ma-85},
\cite{Pa-96}, \cite{PW}, \cite{Ru-exp}, \cite{S}, \cite{M-MZ},
\cite{M-DCDS06}, \cite{MU-BLMS}, etc.

In \cite{K}, Katznelson proved that an automorphism of the $m$-dimensional torus $\mathbb T^m, m \ge 2$ whose eigenvalues
 are never roots of unity, must be a 2-sided Bernoulli shift.
 However in the case of an endomorphism $f_A$ of $\mathbb T^m$ induced by the matrix $A$,  the situation is
 completely different
  and we cannot say a priori whether $f_A$ is 1-sided Bernoulli with respect to the corresponding Lebesgue (Haar)
  measure; we know that its natural extension is 2-sided
  Bernoulli, but this does not tell us much about the possible
  1-sided Bernoullicity of $f_A$.

In order to tackle the problem whether an expanding toral endomorphism $f_A$ (i.e a group  endomorphism on $\mathbb T^m$
given by a matrix $A$, all of whose eigenvalues are strictly larger than 1 in norm) is 1-sided Bernoulli, we need the notion of \textit{tree very weakly Bernoulli}, introduced by Hoffman and Rudolph in \cite{HR}. This condition is similar to that of very weakly Bernoulli automorphism (see \cite{K}, \cite{OW}, \cite{KS}, etc.), and is suited to deal with the whole tree of prehistories for an endomorphism.

In Theorem \ref{main} we will show that an expanding toral endomorphism $f_A$ is a \textit{uniform} constant-to-1 endomorphism with respect to the Lebesgue measure, and that it is tree very weakly Bernoulli;  hence by \cite{HR}, it will follow that it is 1-sided Bernoulli.

We extend this result also to perturbations $g$ of an expanding toral endomorphism $f_A$ on $\mathbb T^m$, together with their respective measure of maximal entropy $\mu_g$ on $\mathbb T^m$. Such systems are shown to be 1-sided Bernoulli too.

We then study examples of group extensions for expanding toral
endomorphisms $f_A$, given by weakly mixing skew products
associated to summable cocycles with values in tori (for instance
\cite{AS}, \cite{CP}, \cite{HR}, \cite{KN}, \cite{Pa-96},
\cite{PP}, etc.) These are maps $f_{A, \psi} : \mathbb T^m \times
\mathbb T^k \to \mathbb T^m \times \mathbb T^k$, of type $$ f_{A,
\psi}(x, y_1, \ldots, y_k) = (f_A(x), \psi_1(x) + y_1 \
(\text{mod} \ 1), \ldots, \psi_k(x) + y_k \ (\text{mod}\ 1)), $$
with $ x \in \mathbb T^m, (y_1, \ldots, y_k) \in \mathbb T^k$, and
where $\psi: \mathbb T^m \to \mathbb T^k$ is a Holder continuous
function.

The conditions for a toral extension $f_{A, \psi}$ to be weakly mixing with respect to the product of Haar measures on $\mathbb T^m$ and $\mathbb T^k$, are related to the "liniar independence" of the components of the cocycle; or in other words to the fact that $\psi$ is not a coboundary (see \cite{AS}, \cite{KN}, \cite{Pa-97}, \cite{PP}, etc.)

We can find in this way a large class of examples of group extensions of expanding toral endomorphisms which are shown to be 1-sided Bernoulli as well.
Such toral extensions can be constructed also for perturbations $g$ of $f_A$, as above.

\section{1-sided Bernoulli toral maps. Group extensions.}

In the sequel we work with \textbf{Lebesgue space systems}, i.e with measurable endomorphisms $f: X \to X$ on Lebesgue spaces $(X, \mathcal{B}, \mu)$ s.t $f$ preserves the probability measure $\mu$. In some cases, when there is no confusion on the $\sigma$-algebra $\mathcal{B}$ on $X$, we shall write only $(X, \mu)$ for the Lebesgue space and $(X, \mu, f)$ for the system.

In particular we shall investigate toral endomorphisms of type $f_A: \mathbb T^m \to \mathbb T^m, m \ge 2$, given by an integer-valued matrix $A$ all of whose eigenvalues are strictly larger than 1 in absolute value. In this case we see that $f_A$ is also a distance expanding map with respect to the Riemannian metric on $\mathbb T^m$.
We consider the Lebesgue (Haar) measure $\mu_m$ on $\mathbb T^m$, which clearly is preserved by $f_A$; in fact $\mu_0$ is the unique measure of maximal entropy for $f_A$.

In \cite{K} Katznelson showed that a group automorphism $T$ of $\mathbb T^m$, none of whose eigenvalues are roots of unity, is in fact a \textit{Bernoulli (2-sided) shift} with respect to $\mu_m$. This means that there exists a measure-theoretic conjugacy (isomorphism) between the  Lebesgue space systems $(\mathbb T^m, \mu_m, T)$ and $(\Sigma_r, \nu_p, \sigma_r)$, where $\Sigma_r$ is the space of all bi-infinite sequences formed on $r$ symbols, $\sigma_r: \Sigma_r \to \Sigma_r$ is the shift map and $\nu_p$ is the probability measure on $\Sigma_r$ induced by the probability vector $p = (p_1, \ldots, p_r), p_1 + \ldots + p_r = 1$, in such a way that $\nu_p(\{\omega \in  \Sigma_r, \omega_0 = i \}) = p_i, i = 1, \ldots, r$.

A \textbf{1-sided Bernoulli shift} is a Lebesgue space system $(X, \mu, f)$ isomorphic to the \textit{model} system $(\Sigma_r^+, \sigma_r, \nu_p)$, where $\Sigma_r^+ = \{(\omega_0, \omega_1, \ldots), \omega_j \in \{1, \ldots, r\}, j \ge 0\}$ is the space of positively indexed sequences on $r$ symbols, and $\sigma_r, \nu_p$ are as before. A \textit{uniform model} 1-sided Bernoulli shift $(\Sigma_r^+, \nu_{(\frac 1r, \ldots, \frac 1r)}, \sigma_r)$ corresponds to the uniformly distributed probability vector $p$  with $p_i = \frac 1r, i = 1, \ldots, r$.

As said in the Introduction, ergodic properties of 1-sided Bernoulli shifts are very different than those for 2-sided Bernoulli shifts, due to their non-invertibility (\cite{PW}, \cite{Wa}, etc.) \
 Hoffman and Rudolph (\cite{HR}) introduced a notion of \textit{tree very weakly Bernoulli} to deal with the whole tree of prehistories that a point may have with regards to an endomorphism. The notion of \textit{very weakly Bernoulli} (for instance \cite{KS}, \cite{K}, etc.) involves automorphisms which interchange the sets in partitions of type $T^k \xi, k \ge 1$ (for a certain partition $\xi$). To define the notion of tree very weakly Bernoulli in the case of endomorphisms, one  uses the (many) partial inverses of the endomorphism $T$ and then automorphisms which preserve the tree structure on the tree of prehistories given by those partial inverses.

Let us remind therefore the definition of \textit{tree very weakly Bernoulli}.

Assume that $f:X \to X$ is a measure-preserving endomorphism (measurable map) on a Lebesgue space $(X, \mathcal{B}, \mu)$.
Now one can take a measurable partition of $X$ by fibers of $f$ of type $f^{-1}(x), x \in X$. Associated to this partition there exists the family of conditional measures of $\mu$, denoted by $\{\mu_x\}_x$; for $\mu$-almost every $x \in X$, the conditional measure $\mu_x$ is a probability measure on the fiber $f^{-1}(x)$.

Assume next that $f$ is \textbf{uniformly $r$-to-one} (\cite{HR}): i.e that it has measure-theoretic entropy $h_\mu(f) = \log r$, that $\mu$-almost all points in $X$ have $r$ $f$-preimages in $X$ and, moreover that the conditional expectations of the preimages are all equal to $\frac {1}{r}$.  The last condition means that the conditional measure $\mu_x$ constructed above on $f^{-1}(x)$, is uniformly distributed over the $r$ preimages of $x$ from $f^{-1}(x)$, for $\mu$-almost all $x \in X$.

Let us now denote by $\mathcal{T}$ an abstract model of infinite
tree having exactly $r^n$ nodes at each index $n \ge 0$. In this
tree, the root node at level 0 is unlabeled; then we label each
node at index 1 by a value in $\{0, \ldots, r-1\}$. Then at index
2 we connect each node from level (index) 1 by one other node
labeled by a value in $\{0, \ldots, r-1\}$. In this way we label a
node $v$ at level $n\ge 1$ by the sequence of values from $\{0,
\ldots, r-1\}$, which sequence joins the root with $v$. This gives
us also a way to concatenate nodes, for instance by $vw$ we mean a
node rooted at $v$ and continued with $w$. If $v$ is a node at
level (index) $n$ of $\mathcal{T}$, then we say also that it has
\textit{length} $n$.

By $\mathcal{T}_n$ we shall denote the finite subtree of $\mathcal{T}$ that has all the nodes of $\mathcal{T}$ at levels smaller than or equal to $n$.

Now for $Y$ a compact metric space, a $(\mathcal{T}, Y)$-\textit{name} is by definition any function
$h: \mathcal{T} \to Y$; the function $h$ is called \textit{tree adapted}
if for any node $v$ and $i, j \in \{0, \ldots, r-1\}, i \ne j$, we
have $h(vi) \ne h(vj)$.

If $\mathcal{B}(Y)$ denotes the borelian $\sigma$-algebra on the compact metric space $Y$ and if $\phi: X \to Y$ is a map, then denote by $\mathcal{F}(\phi)$ the $\sigma$-algebra on $X$ given by the pullback of $\mathcal{B}(Y)$ through $\phi$. We shall say that $\phi$ \textit{generates} with respect to the measure-preserving system $(X, \mathcal{B}, \mu, f)$ if $$\vee_i f^{-i}(\mathcal{F}(\phi)) = \mathcal{B},$$ i.e if by taking the join algebra of the consecutive pullbacks of $\mathcal{F}(\phi)$, we obtain the original $\sigma$-algebra $\mathcal{B}$ on $X$.

Given a uniform $r$-to-1 endomorphism $f$ on the Lebesgue space $(X, \mathcal{B}, \mu)$, a map $\phi:X \to Y$ and a point $x \in X$, we shall
denote by $\mathcal{T}_x^\phi$ the $(\mathcal{T}, Y)$-name
given by the values of $\phi$ on the tree of consecutive $f$-preimages of $x$ from X. Namely first consider a \textit{Rokhlin partition}, i.e a measurable partition of $(X, \mathcal{B}, \mu)$ into sets $\{E_0, \ldots, E_{r-1}\}$ so that $f:E_i \to X$ be bijective $\mu$-almost everywhere, for all $i = 0, \ldots, r-1$. This allows us to define $r$ "local" inverse branches of $f$, i.e we take $f_i:X \to E_i$ to be the inverse of $f|_{E_i}: E_i \to X, i = 0, \ldots, r-1$. And then for a node $v \in \mathcal{T}$ at level $n \ge 1$, obtained from the sequence of symbols $j_1, \ldots, j_n$ from $\{0, \ldots, r-1\}$, with $j_k$ representing the symbol at level $k$, we  define the inverse iterate $f_v(x) := f_{j_n}(\ldots(f_{j_1}(x))\ldots)$.  Now the $(\mathcal{T}, Y)$-\textit{name of $\phi$-values on the preimages of $x$}, denoted by $\mathcal{T}_x^\phi$, is given as $$\mathcal{T}_x^\phi(v):= \phi(f_v(x)), v \in \mathcal{T}$$

Next we say that a map $\phi: X \to Y$ is \textit{tree adapted} on $X$ if the associated name $\mathcal{T}_x^\phi$ of $\phi$-values on the preimage tree of $x$, is a tree adapted $(\mathcal{T}, Y)$-name for $\mu$-almost every $x \in X$.

Let us denote now by $\mathcal{A}$ the collection of
\textit{tree automorphisms} on $\mathcal{T}$, i.e the collection of bijections from the set of nodes of $\mathcal{T}$ to itself, which preserve the
tree structure; i.e $A$ takes a node of type $v = (v_0, \ldots, v_k)$ into a node of type $(A(v_0, \ldots, v_{k-1}), v_k')$. Denote by $\mathcal{A}_n$ the set of bijections on the set of nodes up to level $n$, preserving again the tree
structure. Then for any $n > 1$, we can define a metric on the
space of $(\mathcal{T}, Y)$-names by
$$
 t_n(g, h):= \mathop{\inf}\limits_{A \in \mathcal{A}_n} \frac
1n \mathop{\sum}\limits_{0 < |v| \le n} \frac{1}{r^{|v|}} d(h(v),
g(Av))
$$
Then as in \cite{HR} we define:

\begin{defn}\label{tvwB}
We say that the uniform $r$-to-1 measure preserving system $(X, \mathcal{B}, \mu, f)$ and the tree adapted map $\phi: X \to Y$ are \textbf{tree very weakly Bernoulli} if for any $\vp>0$ and all $n$ large enough, there exists a set $G(\vp, n)$ with $\mu(G(\vp, n)) > 1-\vp$, such that
$t_n(\mathcal{T}_z^\phi, \mathcal{T}_w^\phi) < \vp, \forall z, w \in G(\vp, n)$
\end{defn}

\begin{thm}\label{main}
Let $f_A$ be a toral endomorphism on $\mathbb T^m, m \ge 2$, given by the integer-valued matrix $A$, all of whose eigenvalues are strictly larger than 1 in absolute value.
Then the endomorphism $f_A$ on the torus $\mathbb T^m$ equipped with its Lebesgue (Haar) measure $\mu_m$, is isomorphic to a uniform model 1-sided Bernoulli shift.

\end{thm}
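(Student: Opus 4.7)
The plan is to verify the three hypotheses of the Hoffman--Rudolph criterion (\cite{HR})---uniform $r$-to-1, existence of a tree-adapted generator, and the tree very weakly Bernoulli property---and conclude that $(\mathbb{T}^m, \mu_m, f_A)$ is isomorphic to the uniform 1-sided Bernoulli shift on $r:=|\det A|$ symbols. The uniform $r$-to-1 property is immediate: $f_A$ has entropy $h_{\mu_m}(f_A)=\log r$, and the fiber $f_A^{-1}(x)$ is a coset of the finite subgroup $\ker f_A\subset \mathbb{T}^m$ of order $r$, on which the translation-invariance of Haar measure forces the conditional measure $\mu_{m,x}$ to be the equidistribution. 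For the generator, take $E_0$ to be a fundamental (say Voronoi) domain for the translation action of $\ker f_A$ on $\mathbb{T}^m$, set $E_i:=E_0+k_i$ with $k_i$ running over $\ker f_A$, and let $\phi\colon \mathbb{T}^m\to\{0,\ldots,r-1\}$ be the resulting labelling map with the discrete metric on the codomain. Each $f_A|_{E_i}$ is a bijection onto $\mathbb{T}^m$ modulo null sets, and the $r$ preimages of every point lie one in each $E_i$, so $\mathcal{T}_x^\phi$ is tree adapted for every $x$. Uniform expansion of $f_A$ with minimal eigenvalue modulus $\lambda>1$ shrinks the cylinders of $\bigvee_{i=0}^{n} f_A^{-i}\mathcal{F}(\phi)$ to diameter $O(\lambda^{-n})$, so $\phi$ generates $\mathcal{B}$.

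The main step is verifying tvwB. Bounding $t_n$ by its value at the identity tree automorphism and using the discreteness of $d_Y$ gives
\[
t_n(\mathcal{T}_z^\phi, \mathcal{T}_w^\phi)\le \frac{1}{n}\sum_{k=1}^{n}\frac{1}{r^k}\sum_{|v|=k}\mathbf{1}_{\phi(f_v(z))\ne \phi(f_v(w))}.
\]
Each inverse branch has the form $f_i(y)=A^{-1}y+c_i$, so in an adapted norm $d(f_v(z),f_v(w))\le C\lambda^{-k}$ for every $|v|=k$, and a disagreement at level $k$ forces $f_v(z)$ into the $C\lambda^{-k}$-neighborhood $B_k$ of $\partial\mathcal{P}=\bigcup_i \partial E_i$. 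Since $\partial\mathcal{P}$ is piecewise smooth of codimension one, $\mu_m(B_k)\le C'\lambda^{-k}$; the $f_A$-invariance of $\mu_m$ combined with the uniform $r$-to-1 property yields the identity
\[
\int_{\mathbb{T}^m}\frac{1}{r^k}\sum_{|v|=k}\mathbf{1}_{B_k}(f_v(z))\,d\mu_m(z)=\mu_m(B_k),
\]
and summing geometrically gives $\int_{\mathbb{T}^m\times\mathbb{T}^m} t_n(\mathcal{T}_z^\phi, \mathcal{T}_w^\phi)\,d\mu_m(z)\,d\mu_m(w)=O(1/n)$. The pseudo-metric $t_n$ satisfies the triangle inequality (by composing tree automorphisms), so Fubini supplies a center $w_n^\ast$ with $\int t_n(\mathcal{T}_z^\phi, \mathcal{T}_{w_n^\ast}^\phi)\,d\mu_m(z)=O(1/n)$, and $G(\vp,n):=\{z: t_n(\mathcal{T}_z^\phi, \mathcal{T}_{w_n^\ast}^\phi)<\vp/2\}$ then has Haar measure $>1-\vp$ once $n$ is large, with $t_n<\vp$ on $G(\vp,n)\times G(\vp,n)$ by triangle inequality.

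Combining the three ingredients, \cite{HR} delivers the isomorphism with a 1-sided Bernoulli shift on $r$ symbols, and the uniformity of the conditional measures identifies it with the uniform model. The main obstacle is the passage from the $L^1$ average bound to the bilateral pointwise bound required by Definition~\ref{tvwB}: it is handled here by the triangle inequality together with the exact preimage equidistribution stemming from the translation-invariance of Haar measure, a feature that will need a quantitative replacement in the perturbation and skew-product settings treated later.
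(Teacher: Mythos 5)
There is a genuine gap, and it sits at the heart of both the ``generation'' step and the tvwB step. You take $\phi$ to be the labelling map of the \emph{same} Rokhlin partition $\{E_0,\dots,E_{r-1}\}$ that defines the inverse branches. But then, by the very definition of the tree name, $f_v(x)\in E_{j_k}$ where $j_k$ is the last symbol of $v$, so $\mathcal{T}_x^\phi(v)=\phi(f_v(x))=j_k$ for \emph{every} $x$: all tree names coincide, the indicator sum $\sum_{|v|=k}\mathbf{1}_{\phi(f_v(z))\ne\phi(f_v(w))}$ in your display is identically zero, and the whole boundary-neighborhood estimate (the set $B_k$, the bound $\mu_m(B_k)\le C'\lambda^{-k}$, the transfer-operator identity, the Fubini/triangle-inequality ``center'' argument) proves nothing that was not vacuously true. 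With this choice of $\phi$, tvwB is trivial and the entire content of the theorem is carried by the unproved claim that $\phi$ generates. That claim is justified by the assertion that each branch has the form $f_i(y)=A^{-1}y+c_i$, hence that cylinders of $\bigvee_{i\le n}f_A^{-i}\mathcal{F}(\phi)$ have diameter $O(\lambda^{-n})$ and that $d(f_v(z),f_v(w))\le C\lambda^{-k}$ for all $z,w$. This is false: $f_A$ is a covering map of degree $r>1$ of the connected torus, so it admits no continuous global section; any branch $f_i:\mathbb{T}^m\to E_i$ onto a fundamental-domain piece is necessarily discontinuous (across $f_A(\partial E_i)$), the uniform contraction estimate fails for pairs separated by these discontinuities, and the cylinders need not shrink geometrically. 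So neither generation nor the quantitative tvwB bound is established as written. (Incidentally, if you \emph{could} prove that such a Rokhlin partition generates, you would not need Hoffman--Rudolph at all: uniformity of the conditional measures forces every $n$-cylinder to have measure exactly $r^{-n}$, so the itinerary coding is already the isomorphism with the uniform one-sided shift. That the hard work collapses onto a single unproved claim is a sign the division of labor is off.)

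The paper avoids both problems by a different choice: it takes $\phi=\mathrm{Id}:\mathbb{T}^m\to\mathbb{T}^m$, which generates trivially, and verifies tvwB geometrically rather than through a fixed global branch matching. Using that $f_A$ is expanding and topologically exact, for $n>N(\vp)$ every $n$-preimage $y_{-n}$ of $y$ has a \emph{unique} $n$-preimage $z_{-n}$ of $z$ with $z_{-n}\in B_n(y_{-n},\vp)$, and vice versa; this pairing is exactly a nontrivial tree automorphism in $\mathcal{A}_n$, over which one takes the infimum in $t_n$, giving $t_n(\mathcal{T}_y,\mathcal{T}_z)<C\vp$ for \emph{all} pairs $y,z$, so $G(\vp,n)=\mathbb{T}^m$. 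Your probabilistic skeleton (the transfer-operator identity, the triangle inequality for $t_n$, Markov's inequality to produce $G(\vp,n)$) is fine and would be useful in a setting where only an $L^1$ bound is available, but to make your route work you would need a labelling partition different from the branch partition and, crucially, a way to handle the unavoidable discontinuities of the inverse branches --- which in effect pushes you back to the matching-by-nearby-preimages argument the paper uses.
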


\begin{proof}

From the fact that all the eigenvalues of $A$ are larger than 1 in absolute value, it follows that $f_A$ is an expanding map on $\mathbb T^m$. Also it is well-known that $f_A$ is $|\text{det}(A)|$-to-1 on $\mathbb T^m$ (for instance \cite{Wa}).
Assume $|\text{det}(A)| = r \ge 2$ (otherwise $f_A$ is an automorphism, and toral automorphisms  are 2-sided Bernoulli shifts by \cite{K}).

Now if $f_A$ is expanding and $r$-to-1, we know that $\mu_m$ is
the limit of a sequence of probability measures of type $\nu_n^x:=
\frac{1}{r^n} \mathop{\sum}\limits_{y \in f_A^{-n}x}
\delta_y$, for some $x \in \mathbb T^m$ (see for example \cite{Ru-exp}, \cite{Ma}). This implies that $$\mu_m(f_A(B)) = r \mu_m(B),$$ for any borelian set $B$ so that $f|_B$ is
injective.  Thus the conditional
probabilities of $\mu_m$, associated to the partition $\xi$ into fibers $\{f^{-1}(z)\}_z$, are equidistributed  on the fibers of $\mu_m$-almost all points from
$\mathbb T^m$, i.e such a conditional probability gives weight equal to $\frac 1r$ to each of the $r$ $f_A$-preimages of $z$.

But $\mu_m$ is the Lebesgue (Haar) measure on $\mathbb T^m$ and $f_A$ is supposed to be $r$-to-1, hence the entropy $h_{\mu_m}(f_A) = \log r$.
So $h_{\mu_m}(f_A) = \log r$, the conditional probabilities of $\mu_m$ on the
preimages are all equal to $\frac 1r$ and $f_A$ is $r$-to-1, meaning that $(\mathbb T^m, \mathcal{B}, \mu_m, f_A)$ is a \textit{uniform measure
preserving endomorphism}.

Now $f_A$ is expanding and open (since $f|_\Lambda$ is $r$-to-1), hence $f|_\Lambda$ is topologically exact. Thus for any $\vp>0$ small there exists some positive
integer $N$ (independent of $y, z$) so that, given any $y, z \in \mathbb T^m$ and any $N$-preimage $y_{-N}$ of $y$, there exists an
$N$-preimage $z_{-N}$ of $z$, such that
\begin{equation}\label{preim}
d(y_{-N}, z_{-N}) < \vp
\end{equation}

As our \textit{generating} function we will take the identity
$Id: \mathbb T^m \to \mathbb T^m$ which clearly generates the
$\sigma$-algebra of borelians on $\mathbb T^m$.

From (\ref{preim}), and the fact that local inverse iterates of $f$ contract distances, we infer that given any points $y, z \in \mathbb T^m$, there exists $N = N(\vp)$ such that for any $n> N$ and any $n$-preimage $y_{-n}$ of $y$, there exists a unique  $n$-preimage $z_{-n}$ of $z$, so that $z_{-n} \in B_n(y_{-n}, \vp)$; and vice-versa, for any $n$-preimage $z_{-n} \in \Lambda$ of $z$, there is a unique $n$-preimage $y_{-n} \in \Lambda$ of $y$ with $y_{-n} \in B_n(z_{-n}, \vp)$.
Therefore for any $\vp>0$, there exists $N(\vp)$ so that we have:
\begin{equation}\label{Rudolph}
 t_n(\mathcal{T}_y, \mathcal{T}_z) < C\vp, \forall y, z \in \mathbb T^m, n > N(\vp)
\end{equation}
where $C>0$ is a constant, independent of $\vp, n, y, z$ ($C$ depends only on the minimum expansion coefficient of $f_A$ on $\mathbb T^m$). So in our case the set $G(\vp, n)$ from the definition of tree very weakly Bernoulli, is the whole $\mathbb T^m$.

Thus the measure preserving uniform endomophism $(\mathbb T^m, \mathcal{B},  \mu_m, f_A)$ and the generating function $Id: \mathbb T^m \to \mathbb T^m$ are tree very weakly Bernoulli. In conclusion from \cite{HR} we see that $(\mathbb T^m, \mathcal{B}, \mu_m, f_A)$ is 1-sided Bernoulli and conjugate to the uniform model $(\Sigma_r^+, \nu_{(\frac 1r, \ldots, \frac 1r)}, \sigma_r)$.

\end{proof}

We now consider $\mathcal{C}^2$-smooth \textbf{perturbations} $g: \mathbb T^m \to \mathbb T^m$ of the expanding endomorphism $f_A$, in the $\mathcal{C}^1$ topology. Such a map $g$ has a unique measure of maximal entropy denoted by $\mu_g$, on $\mathbb T^m$. We have that $\mu_g$ is absolutely continuous with respect to the Lebesgue measure $\mu_m$ and that the Radon-Nykodim derivative $\frac{d\mu_g}{d\mu_m}$ is Holder continuous, and bounded away from 0 and $\infty$ (see for instance \cite{Ma}).

\begin{thm}\label{perturb}
Let $A$ be an integer-valued $m\times m$ matrix, all of whose eigenvalues are strictly larger than 1 in absolute value, and let $f_A$ be the associated toral endomorphism on $\mathbb T^m$. Assume that $g$ is a $\mathcal{C}^2$ perturbation of $f_A$ in the $\mathcal{C}^1$-topology; and denote by $\mu_g$ the unique measure of maximal entropy of $g$ on $\mathbb T^m$. Then the system $(\mathbb T^m, \mu_g, g)$  is 1-sided Bernoulli.
\end{thm}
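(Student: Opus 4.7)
The plan is to mimic the proof of Theorem \ref{main}, replacing $f_A$ by its $C^1$-close perturbation $g$ and the Haar measure $\mu_m$ by the measure of maximal entropy $\mu_g$. Concretely, I would verify that $(\mathbb T^m,\mathcal B,\mu_g,g)$ is a uniform $r$-to-$1$ endomorphism in the sense of \cite{HR}, that the identity $Id:\mathbb T^m\to\mathbb T^m$ is a tree-adapted generator, and that this data is tree very weakly Bernoulli; the Hoffman-Rudolph theorem then identifies the system with a uniform one-sided Bernoulli shift.

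First I would observe that because $g$ is $C^1$-close to $f_A$, its differential $Dg$ is still uniformly expanding, so $g$ is a distance expanding open self-map of $\mathbb T^m$. Being homotopic to $f_A$, the map $g$ has the same topological degree $r=|\det A|$, so $g$ is $r$-to-$1$ and its topological entropy equals $\log r$; consequently $h_{\mu_g}(g)=\log r$ by definition of $\mu_g$. The next step, and the crux of the proof, is to show that the fiber-conditional probabilities of $\mu_g$ are uniform, putting mass $\tfrac1r$ on each of the $r$ preimages. This no longer follows from translation invariance, so I would derive it from Rokhlin's entropy formula $h_{\mu_g}(g)=\int\log J_{\mu_g}\,d\mu_g$ combined with the pointwise identity $\sum_{y\in g^{-1}(x)}J_{\mu_g}(y)^{-1}=1$ and Jensen's inequality, which together force $J_{\mu_g}\equiv r$ $\mu_g$-a.e.; equivalently, $\mu_g(g(B))=r\,\mu_g(B)$ on every injectivity set $B$. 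An alternative route is the weak-$*$ limit description $\mu_g=\lim_n \frac{1}{r^n}\sum_{y\in g^{-n}(x)}\delta_y$, which is available for expanding open maps (see \cite{Ru-exp}, \cite{Ma}).

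Once uniform $r$-to-$1$ is in hand, I would run the tree very weakly Bernoulli estimate of Theorem \ref{main} essentially verbatim. As an expanding open self-map of the connected manifold $\mathbb T^m$, $g$ is topologically exact: for every $\vp>0$ there is $N=N(\vp)$ so that for any $y,z\in\mathbb T^m$ and any $N$-preimage $y_{-N}$ of $y$, some $N$-preimage $z_{-N}$ of $z$ lies within $\vp$ of $y_{-N}$. Because the inverse branches of $g$ contract distances at a uniform rate $\lambda^{-1}<1$, this initial pairing extends, by following consistent inverse branches, to a bijective pairing of $n$-preimages of $y$ with those of $z$ at exponentially small Bowen distance for every $n>N$. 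Taking the (tree-adapted and trivially generating) identity $Id$ as the coding function, this yields
$$
t_n(\mathcal T_y^{Id},\mathcal T_z^{Id}) < C\vp, \qquad \forall\, y,z\in\mathbb T^m,\ n>N(\vp),
$$
so one can again take $G(\vp,n)=\mathbb T^m$ in Definition \ref{tvwB}. Applying \cite{HR} concludes that $(\mathbb T^m,\mu_g,g)$ is 1-sided Bernoulli.

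I expect the main obstacle to be precisely the uniformity of the fiber conditionals of $\mu_g$: in Theorem \ref{main} this was immediate from invariance under translations, whereas here $\mu_g$ is only absolutely continuous with Hölder density with respect to $\mu_m$, and equidistribution on fibers has to be extracted from thermodynamic formalism. The remaining ingredients, namely expanding geometry, openness, topological exactness and uniform contraction of inverse branches, all transfer from $f_A$ to $g$ by stability of uniform expansion under $C^1$-small perturbations, so that portion of the argument is essentially unchanged.
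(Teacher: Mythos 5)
Your proposal is correct and takes essentially the same approach as the paper: show that $g$ is expanding, $r$-to-$1$ and a uniform endomorphism with respect to $\mu_g$, then rerun the tree very weakly Bernoulli estimate of Theorem \ref{main} with $G(\vp,n)=\mathbb T^m$ and invoke Hoffman--Rudolph. The only minor differences are that the paper deduces the degree and entropy from Shub's topological conjugacy of $g$ with $f_A$ and obtains the uniform fiber conditionals from the weak-$*$ limit $\mu_g=\lim_n r^{-n}\sum_{y\in g^{-n}(x)}\delta_y$ (which you list as your alternative route), whereas your primary argument uses Rokhlin's formula together with Jensen's inequality to force $J_{\mu_g}\equiv r$; both are sound.
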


\begin{proof}
First since $g$ is a perturbation of the expanding endomorphism $f_A$ on $\mathbb T^m$, it follows from the results of Shub (\cite{S}) that $g$ is topologically conjugate to $f_A$ on $\mathbb T^m$. Thus there exists a homeomorphism $H: \mathbb T^m \to \mathbb T^m$ so that $$H\circ g = f_A \circ H$$
This implies easily that, since $f_A$ is $r$-to-1 (where $r = |\text{det} A|$), then $g$ is also $r$-to-1 on $\mathbb T^m$.
Also, if $g$ is topologically conjugate to $f_A$, we obtain that the topological entropy of $g$ is the same as the topological entropy of $f_A$, i.e
$$h_{top}(g) = h_{\mu_g}(g) = h_{top}(f_A) = \log r$$

Next if $g$ is sufficiently close to $f_A$ in $\mathcal{C}^1$
topology, it follows that $g$ is expanding too. Hence we can apply
the results of \cite{Ma}, \cite{Ru-exp} in order to obtain the
unique measure of maximal entropy $\mu_g$ of $g$, as the limit of
the sequence of probabilities $$\nu^x_{g, n}:=
\frac{\mathop{\sum}\limits_{y \in g^{-n}(x)}\delta_y}{r^n}, \ n
\ge 1$$ But this shows as in Theorem \ref{main} that $\mu_g(g(B))
= r \mu_g(B)$, for any borelian set $B \subset \mathbb T^m$, thus
the conditional measures of $\mu_g$ associated to the fiber
partition, are equally distributed among the $r$ preimages in
almost all fibers of $g$.

Thus we obtain from the considerations so far that $g$ is a uniform $r$-to-1 endomorphism with respect to the measure $\mu_g$ on $\mathbb T^m$.

Now we proceed as in the proof of Theorem \ref{main} in order to obtain that $g$ is tree very weakly Bernoulli and then from \cite{HR}, the system $(\mathbb T^m, \mu_g, g)$ is 1-sided Bernoulli.

\end{proof}

\textbf{Example 1.}  One example of a perturbation of an expanding
toral endomorphism is given by the map $g:\mathbb T^2 \to \mathbb
T^2$ $$g(x, y) = (2x + \vp \sin (2\pi x + 4\pi y), 3y + \vp
\cos(2\pi x)), (x, y) \in \mathbb T^2$$ The expanding map $g$ has
a unique measure of maximal entropy $\mu_g$ on $\mathbb T^2$, and
this measure is absolutely continuous with respect to the Haar
measure, although it is not necessarily equal to it. We see from
Theorem \ref{perturb} that $g$ is 1-sided Bernoulli with respect
to its measure of maximal entropy $\mu_g$.

\ \

We will study now an interesting class of endomorphisms which can be constructed starting with some known endomorphisms, namely \textbf{group extensions} (in our case with tori). Many important aspects of group extensions given by skew products have been investigated in the literature, for instance in \cite{AS}, \cite{CP}, \cite{HR}, \cite{KN}, \cite{Pa-96}, \cite{Pa-97}, \cite{PP}, \cite{R}, etc.

Let us start with a measure-preserving endomorphism $f$ on a Lebesgue space $f: (X, \mathcal{B}, \mu) \to (X, \mathcal{B}, \mu)$. Consider also a compact metric space $(Z, d)$ with $Isom(Z)$ being the space of its isometries (with uniform topology). Assume that $Isom(Z)$ acts transitively on $Z$, so $Z$ is a homogeneous space; then $Z$ is homeomorphic to $Isom(Z)/H$ for some closed subgroup $H \subset Isom(Z)$. Now we can consider on $Z$ the restricted Haar measure $\mu_Z$ induced from the topological group with uniform topology $G = Isom(Z)$ (see \cite{R}).  \
Next let us take an arbitrary  function $\psi: X \to Isom(Z)$ and define the group extension $f_\psi: X \times Z \to X \times Z$,
$$f_\psi(x, z) = (f(x), \psi(x)(z)), (x, z) \in X \times Z$$

The function $\psi$ is called a \textit{cocycle} and $f_\psi$ a \textit{cocycle extension}. On $X \times Z$ we consider the product measure $ \mu \times \mu_Z$, where $\mu_Z$ is the induced Haar measure on $Z$. The cocycle $\psi$ is called a \textit{coboundary} with respect to the endomorphism $f: X \to X$, if there exists a measurable function $\chi$ and a constant $c$ so that $\psi = \chi \circ f - \chi + c$, $\mu$-almost everywhere.

Rudolph showed in \cite{R} that, if $(X, \mathcal{B}, \mu, f)$ is isomorphic (i.e measure-theoretically conjugate) to a 2-sided Bernoulli shift and if $f_\psi$ is weak mixing with respect to $\mu \times \mu_Z$, then $(X \times Z, \mu \times \mu_Z, f_\psi)$ is 2-sided Bernoulli as well. However this result is no longer true a priori for endomorphisms, and moreover the methods and techniques for this case are different (see for instance \cite{Pa-96} where these differences are discussed for a specific extension example).

In the sequel we shall work with a specific case, namely when the
metric space $Z$ is a torus $\mathbb T^k, k \ge 1$. We will use
the additive notation on $\mathbb T^k$. Our cocycle will be given
by a map $$\psi: \mathbb T^m \to \mathbb T^k, \psi = (\psi_1,
\ldots, \psi_k),$$ with $\psi_i: \mathbb T^m \to S^1, i = 1,
\ldots, k$. The group extension of the expanding toral
endomorphism $f_A: \mathbb T^m \to \mathbb T^m$ is the skew
product $f_{A, \psi}: \mathbb T^m \times \mathbb T^k  \to  \mathbb
T^m \times \mathbb T^k$, $$f_{A, \psi}(x, z) = (f_A(x), \psi_1(x)
+ z_1 \ (\text{mod}\ 1), \ldots, \psi_k(x) + z_k \ (\text{mod}\
1)), (x, z) \in \mathbb T^m \times \mathbb T^k$$

Clearly $f_{A, \psi}$  preserves the product measure $\mu_m \times \mu_k$ on $\mathbb T^m \times \mathbb T^k$, where $\mu_m$ and  $\mu_k$ represent the Lebesgue (Haar) measures on $\mathbb T^m$, respectively on $\mathbb T^k$.

Now let us assume that the map $\psi: \mathbb T^m \to \mathbb T^k$ used above is Holder continuous. Since $f_A$ is distance expanding, we see that the branches of inverse iterates of $f_A$ contract exponentially, hence from the Holder continuity of $\psi$ it follows that $\psi$ is a \textit{summable cocycle} (see \cite{HR}, where in our case one considers local inverse iterates of $f_A$).
 \ The next step will be to assure the \textit{weak mixing} of $f_{A, \psi}$ with respect to $\mu_m \times \mu_k$.

Generally speaking, the ergodicity of a measure-preserving endomorphism $f$ means that the associated operator $U_f$ given by composition with $f$ on the space of integrable functions on $(X, \mathcal{B}, \mu)$ (see \cite{Wa}), has only constants as eigenfunctions corresponding to the eigenvalue 1. If $f$ is ergodic and, in addition, $U_f$ has no eigenfunctions except essential constants, then $f$ is said to have \textit{continuous spectrum}. For measure-preserving transformations $f$, continuous spectrum is in fact equivalent to $f$ being weak mixing (see for instance \cite{Wa}). From this we see that there is a strong relation between weak-mixing and Livsic type equations for coboundaries.

Criteria for the weak mixing of the group extension endomorphism $f_{\psi}$ were given first for skew products with rotations (\cite{AS}), then in an abstract setting (see for instance \cite{KN}, \cite{Pa-97}, \cite{R}, etc.), and are centered on the condition that $\psi$ is not a coboundary. \

In our particular case of skew products with tori, conditions for weak mixing were given in \cite{PP}.
Let us assume that $\psi$ is Holder continuous of Holder exponent $\alpha>0$, i. e $\psi \in \mathcal{C}^\alpha(\mathbb T^m, \mathbb T^k)$. Then  we have the following result:

\begin{unthm}[Mixing Conditions for Extensions]
The above expanding map $f_{A, \psi}$ is weak mixing with respect to the product of Haar measures $\mu_m \times \mu_k$ on $\mathbb T^m \times \mathbb T^k$
 if the equation
\begin{equation}\label{livsic}
F \circ f_A(x) = c + \ell_1 \psi_1(x) + \ldots + \ell_k \psi_k(x)
+ F(x) \ \text{mod} \ 1, \text{a.e}
\end{equation}
with $F: \mathbb T^m \to \mathbb R$ measurable, $(\ell_1, \ldots, \ell_k) \in \mathbb Z^k$ and $c \in \mathbb R$, has only the trivial solution $c=0, (\ell_1, \ldots, \ell_k) = (0, \ldots, 0)$ and $F$ constant.
\end{unthm}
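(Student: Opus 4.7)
The plan is to argue by contrapositive via Fourier analysis in the fiber variable, exploiting the spectral criterion recalled just before the statement: $f_{A,\psi}$ is weak mixing with respect to $\mu_m \times \mu_k$ if and only if its Koopman operator has only constant eigenfunctions. So I assume $G \in L^2(\mathbb{T}^m \times \mathbb{T}^k, \mu_m \times \mu_k)$ satisfies $G \circ f_{A,\psi} = \lambda G$ with $|\lambda| = 1$, and I aim to show that either the equation (\ref{livsic}) admits a nontrivial solution, or $G$ is essentially constant and $\lambda = 1$. Expand $G$ in its fiber Fourier series,
$$G(x,z) = \sum_{\ell \in \mathbb{Z}^k} G_\ell(x)\, e^{2\pi i \langle \ell, z\rangle},$$
with each $G_\ell \in L^2(\mathbb{T}^m, \mu_m)$ by Parseval/Fubini. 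Substituting into $G \circ f_{A,\psi} = \lambda G$ using $f_{A,\psi}(x,z) = (f_A(x), \psi(x)+z)$, and equating Fourier coefficients in $z$, I obtain for every $\ell \in \mathbb{Z}^k$ the per-character identity
$$G_\ell \circ f_A(x) \cdot e^{2\pi i \langle \ell, \psi(x)\rangle} = \lambda\, G_\ell(x), \qquad \text{for } \mu_m\text{-a.e. } x.$$

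Taking moduli yields $|G_\ell| \circ f_A = |G_\ell|$, so $|G_\ell|$ is $f_A$-invariant. Since $f_A$ is ergodic on $(\mathbb{T}^m, \mu_m)$ (in fact 1-sided Bernoulli by Theorem \ref{main}), each $|G_\ell|$ is $\mu_m$-a.e. a nonnegative constant $c_\ell$. Suppose first that $c_\ell > 0$ for some $\ell \neq 0$. Normalize so that $|G_\ell| \equiv 1$, pick a measurable branch of the argument and write $G_\ell(x) = e^{2\pi i F_\ell(x)}$, and write $\lambda = e^{2\pi i c}$. The per-character identity then becomes
$$F_\ell \circ f_A(x) = c + \ell_1 \psi_1(x) + \ldots + \ell_k \psi_k(x) + F_\ell(x) \ \text{mod}\ 1, \quad \text{a.e.},$$
which is precisely a nontrivial solution of (\ref{livsic}) with vector $(\ell_1,\ldots,\ell_k) \neq 0$, contradicting the hypothesis. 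Therefore $G_\ell \equiv 0$ for every $\ell \neq 0$, so $G(x,z) = G_0(x)$ and the identity for $\ell = 0$ reduces to $G_0 \circ f_A = \lambda G_0$. Since $f_A$ is weak mixing on $(\mathbb{T}^m, \mu_m)$ (being Bernoulli), this forces $\lambda = 1$ and $G_0$ constant, whence $G$ is essentially constant and $f_{A,\psi}$ is weak mixing.

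The main technical subtlety is the measurable selection of the argument $F_\ell$ on the set where $G_\ell$ lives, so that the complex-valued per-character equation translates correctly into the real-valued additive Livsic form (\ref{livsic}); this is handled by a standard Borel selection for $\arg: S^1 \to \mathbb{R}/\mathbb{Z}$, yielding a measurable $F_\ell : \mathbb{T}^m \to \mathbb{R}$ for which the equation holds mod $1$ almost everywhere. Apart from this, everything else is routine: Parseval in $L^2(\mathbb{T}^k)$ justifies the term-by-term matching of Fourier coefficients, and the base properties of $f_A$ (ergodicity, weak mixing) needed to dispatch the $\ell = 0$ term are already available from Theorem \ref{main}.
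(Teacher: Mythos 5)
Your argument is correct, and it is essentially the standard proof of this kind of statement. Note, however, that the paper itself does not prove this theorem: it is quoted as an imported result, attributed to Parry--Pollicott \cite{PP} (``In our particular case of skew products with tori, conditions for weak mixing were given in \cite{PP}''), so there is no internal proof to compare against; what you have written is in effect the argument behind that citation (in the style of Keynes--Newton and Parry--Pollicott), specialized to abelian fibers where the character-by-character reduction is exactly a Fourier expansion in $z$. Two small remarks. First, there is a harmless sign slip: with the expansion $G(x,z)=\sum_\ell G_\ell(x)e^{2\pi i\langle \ell,z\rangle}$, matching coefficients gives $G_\ell\circ f_A(x)\,e^{2\pi i\langle\ell,\psi(x)\rangle}=\lambda G_\ell(x)$, hence $F_\ell\circ f_A(x)=c-\langle\ell,\psi(x)\rangle+F_\ell(x)\ \mathrm{mod}\ 1$; this is a solution of (\ref{livsic}) with integer vector $-\ell\neq 0$, so the contradiction with the hypothesis is unaffected since $\ell$ ranges over all of $\mathbb{Z}^k$. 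Second, your use of ergodicity (to make $|G_\ell|$ constant) and of weak mixing of the base (to dispatch the $\ell=0$ term) is legitimate and non-circular: Theorem \ref{main} is proved independently of this statement, and expanding toral endomorphisms are in any case mixing for Haar measure; alternatively, the $\ell=0$ case also follows directly from the hypothesis applied with $(\ell_1,\ldots,\ell_k)=0$, which forces $c=0$ and $F_0$ constant, so $\lambda=1$. The measurable choice of argument is indeed only a Borel branch of $\arg$ composed with a measurable function, so no genuine selection issue arises.
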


\begin{pro}
Let $f_A: \mathbb T^m \to \mathbb T^m$ be an expanding toral
endomorphism and $\psi: \mathbb T^m \to \mathbb T^k$ be a Holder
continuous function. Assume that, if  there exist a measurable
function $F$ on $\mathbb T^m$, a constant $c \in \mathbb R$ and a
$k$-tuple of integers $(\ell_1, \ldots, \ell_k)$ with $F \circ
f_A(x) = c + \ell_1 \psi_1(x) + \ldots + \ell_k \psi_k(x) + F(x)
(\text{mod} \ 1) \ a.e,$ then $c = 0, (\ell_1, \ldots, \ell_k) =
(0, \ldots, 0)$ and $F$ is constant (i.e the equation
(\ref{livsic}) has only the trivial solution). Then the skew
product $f_{A, \psi}$ is 1-sided Bernoulli with respect to the
product of the respective Haar measures $\mu_m \times \mu_k$ on
$\mathbb T^m \times \mathbb T^k$.
\end{pro}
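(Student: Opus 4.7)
The plan is to verify that $(\mathbb{T}^m \times \mathbb{T}^k,\, \mu_m \times \mu_k,\, f_{A,\psi})$ meets the hypotheses of the Hoffman-Rudolph criterion \cite{HR}: that it is a uniform $r$-to-$1$ endomorphism that is tree very weakly Bernoulli with respect to a suitable generating map. The 1-sided Bernoullicity then follows from \cite{HR}.

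First I would establish the uniform $r$-to-$1$ structure with $r = |\text{det}(A)|$. Every preimage of $(x,z)$ under $f_{A,\psi}$ has the form $\bigl(x^{(i)},\, z - \psi(x^{(i)}) \ (\text{mod}\ 1)\bigr)$ where $x^{(1)}, \ldots, x^{(r)}$ are the $f_A$-preimages of $x$, so the map is exactly $r$-to-$1$. The product Haar measure is invariant under the fiber translations, so $\mu_m \times \mu_k$ is preserved, and the Jacobian equals $|\text{det}(A)|$ almost everywhere (translations are measure-preserving), giving $h_{\mu_m \times \mu_k}(f_{A,\psi}) = \log r$ and conditional measures that are uniformly distributed across the $r$ preimages in each fiber, exactly as in Theorem \ref{main}. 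The cocycle $\psi$ is summable in the sense of \cite{HR} because Holder continuity of $\psi$ combined with exponential contraction of the local inverse branches of $f_A$ yields geometric decay of $\psi$-differences along preimage trees. The hypothesis about the Livsic-type equation \eqref{livsic}, together with the Mixing Conditions Theorem cited above, gives that $f_{A,\psi}$ is weak mixing with respect to $\mu_m \times \mu_k$.

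Next, taking the identity on $\mathbb{T}^m \times \mathbb{T}^k$ as the generating function, I would verify the tree very weakly Bernoulli condition. For the first coordinate, I would reuse the argument of Theorem \ref{main}: topological exactness of $f_A$ and exponential contraction of inverse branches give, for any two base points and $n$ large, a natural pairing of the $r^n$ preimages with first coordinates within $\vp$. Under this pairing, the second coordinates of the paired preimages of $(y_1, y_2)$ and $(z_1, z_2)$ differ by $(y_2 - z_2)$ plus a term bounded by $C\vp$ coming from the summability of $\psi$. To absorb the remaining shift $y_2 - z_2$, I would re-pair siblings using tree automorphisms in $\mathcal{A}_n$: weak mixing of the skew product combined with summability of $\psi$ forces the second-coordinate values along the preimage tree to equidistribute sufficiently on $\mathbb{T}^k$ that a tree automorphism exists making both coordinates simultaneously $\vp$-close on a set $G(\vp, n) \subset \mathbb{T}^m \times \mathbb{T}^k$ with $(\mu_m \times \mu_k)(G(\vp, n)) > 1 - \vp$.

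The main obstacle is precisely this last step. Whereas in Theorem \ref{main} the set $G(\vp,n)$ can be taken to be all of $\mathbb{T}^m$ because the first-coordinate contraction is deterministic and uniform, here the second coordinate is shifted deterministically by $y_2 - z_2$ under the natural preimage pairing, and the only flexibility available is to permute siblings node by node via $\mathcal{A}_n$. The rigorous content is therefore to invoke the Hoffman-Rudolph machinery for weak mixing group extensions with summable cocycles over uniform endomorphisms: weak mixing delivers enough equidistribution of $\psi$-sums along preimages, and summability lets this equidistribution pass to the sibling-permutation level. Once the tree very weakly Bernoulli property is in hand, \cite{HR} yields that $(\mathbb{T}^m \times \mathbb{T}^k,\, \mu_m \times \mu_k,\, f_{A,\psi})$ is 1-sided Bernoulli.
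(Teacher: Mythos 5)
Your proposal is correct and follows essentially the same route as the paper: you verify that $\psi$ is a summable cocycle (H\"older continuity plus expansion of $f_A$), that the Livsic-type hypothesis together with the Mixing Conditions theorem yields weak mixing of $f_{A,\psi}$ with respect to $\mu_m\times\mu_k$, and you then defer the tree very weakly Bernoulli property of the extension to the Hoffman--Rudolph machinery for weak mixing isometric extensions with summable cocycles --- which is exactly the paper's appeal to Theorem 6.4 of \cite{HR}, with Theorem \ref{main} supplying the required 1-sided Bernoullicity of the base. Your intermediate hands-on sketch of pairing preimage trees is superfluous (and, as you note, incomplete) once that theorem is invoked, so the argument stands as the paper's does.
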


\begin{proof}
We know from the Holder continuity of $\psi$ and from the uniform dilation of $f_A$ on $\mathbb T^m$ that $\psi$ generates a summable cocycle with respect to $f_A$.

Also since the only solution to equation (\ref{livsic}) is the trivial one, we obtain from the above Mixing Conditions for Extensions, that $f_{A, \psi}$ is weak mixing with respect to the product measure $\mu_m \times \mu_k$.

On the other hand, we showed in Theorem \ref{main} that the expanding toral endomorphism $f_A$ is 1-sided Bernoulli with respect to $\mu_m$.
Thus we can use Theorem 6.4 of \cite{HR}, to conclude that the extension $f_{A, \psi}$ is tree very weakly Bernoulli, hence 1-sided Bernoulli with respect to $\mu_m \times \mu_k$.

\end{proof}

\textbf{Remark 1:} Given $\psi = (\psi_1, \ldots, \psi_k) \in
\mathcal{C}^\alpha(\mathbb T^m , \mathbb T^k)$, we know from
Livsic results that:  \ there exists a measurable function $F:
\mathbb T^m \to \mathbb R$ and a constant $c$ such that $F\circ
f_A(x) = c+\ell_1 \psi_1(x) + \ldots + \ell_k\psi_k(x) + F(x)
\text{mod} \ 1$ a.e if and only if there exists $\tilde F \in
\mathcal{C}^\alpha(\mathbb T^m, \mathbb R)$ such that $\tilde
F(\cdot) = F(\cdot)$ a.e and $\tilde F \circ f_A(x) = c + \ell_1
\psi_1(x) + \ldots + \ell_k \psi_k(x) + \tilde F(x) \ \text{mod} \
1$, for \textit{all} $x \in \mathbb T^m$ (see for instance
\cite{PP}).

This last condition happens if and only if, for any periodic point
$z \in \mathbb T^m$ with $f_A^n(z) = z$, we have that $$-nc =
S_n(\ell_1\psi_1 + \ldots \ell_k \psi_k) (z) \ \text{mod} \ 1,$$
where $S_n\omega(y) := \omega(y)+ \ldots + \omega(f_A^{n-1}(y)), y
\in \mathbb T^m$, defines the $n$-th consecutive sum of
$\omega(\cdot)$, for $n \ge 1$.

\textbf{Example 2.}  The above remark will help us check weak
mixing for specific expanding maps and cocycles. For instance take
the toral endomorphism $f_A$ given by the matrix $A =
\left(\begin{array}{ll}
                                       2 & 1 \\
                                       0 & 6
  \end{array} \right)$, and the cocycle $\psi: \mathbb T^2 \to \mathbb T^2$ given in additive notation by $\psi(x_1, x_2) = (\sin 2\pi(x_1 + 3x_2), \sin 2\pi x_2)$. We obtain then the toral extension $f_{A, \psi}: \mathbb T^2 \times \mathbb T^2 \to \mathbb T^2 \times \mathbb T^2$,
$$f_{A, \psi}(x_1, x_2, y_1, y_2) = (2x_1 + x_2, 6x_2, y_1 + \sin
2\pi(x_1 + 3x_2), y_2 + \sin 2\pi x_2) \ \text{mod} \ 1$$

Let us check if condition (\ref{livsic}) is satisfied, by using
the above remark. Assume there exists $(\ell_1, \ell_2) \in
\mathbb Z^2$, a Holder continuous function $F$ and a constant $c$
such that $F \circ f_A = c + \ell_1 \psi_1 + \ell_2 \psi_2 + F
(\text{mod} \ 1)$. Then $$-nc = S_n(\ell_1\psi_1+\ell_2 \psi_2)(z)
\ \text{mod} \ 1, \text{ as long as} \ f_A^n(z) = z, n \ge 1$$ But
one of the fixed points of $f_A$ is $(0, 0)$ so if
$\ell_1\psi_1(0, 0) + \ell_2 \psi_2(0, 0) = 0$, then $c = 0$.
Consider now $(\frac 45, \frac 15)$ which is another fixed point
of $f_A$. Then we should have: $$\ell_1 \psi_1(\frac 45, \frac 15)
+ \ell_2 \psi_2 (\frac 45, \frac 15) = 0 \ \text{mod} \ 1$$ This
implies that $-\ell_1 \sin \frac{\pi}{5} + \ell_2 \sin
\frac{2\pi}{5} = 0 \ \text{mod} \ 1$; but $\sin \frac{\pi}{5} =
\frac 14 \sqrt{10-2\sqrt{5}}$ and $\sin \frac{2\pi}{5} = \frac 14
\sqrt{10+2\sqrt{5}}$, so we obtain a contradiction. Hence we
conclude that the only solution of (\ref{livsic}) is the trivial
one, meaning that $f_{A, \psi}$ is weakly mixing. By Theorem
\ref{main} we obtain then that $f_{A, \psi}$ is 1-sided Bernoulli
with respect to the Lebesgue measure on $\mathbb T^4$.

Moreover by Theorem \ref{perturb} any extension of type $g_\psi$,
with $g$ a smooth perturbation of $f_A$ and $\psi$ Holder s.t
condition (\ref{livsic}) has only the trivial solution,
 turns out to be 1-sided Bernoulli with respect to the product measure $\mu_g \times \mu_2$ on $\mathbb T^4$.

\

\textbf{Remark 2:} In fact for a given expanding toral
endomorphism $f_A$ on $\mathbb T^m$, \textbf{most cocycles}
generate weak mixing extensions. Indeed let
$\mathcal{C}^\alpha(\mathbb T^m, \mathbb T^k)$ denote the space of
Holder continuous functions of exponent $\alpha$, endowed with the
norm $$||\psi|| = |\psi|_\alpha + |\psi|_\infty,$$ where
 $|\psi|_\alpha := \mathop{\sup}\limits_{x \ne y}\frac{|\psi(x) - \psi(y)|}{|x-y|^\alpha}$ and $|\psi|_\infty$ is
 the uniform norm. Then from \cite{Pa-97} it follows that the collection of functions
 $\psi \in \mathcal{C}^\alpha(\mathbb T^m, \mathbb T^k)$ which are \textbf{not}
 coboundaries in the sense of equation (\ref{livsic}), i.e which give a weak mixing extension $f_{A, \psi}$,
 contains a \textbf{dense} $G_\delta$ set in $\mathcal{C}^\alpha(\mathbb T^m, \mathbb T^k)$.

Thus for "most" cocycles $\psi \in \mathcal{C}^\alpha(\mathbb T^m,
\mathbb T^k)$, the toral extension $f_{A, \psi}$ is 1-sided
Bernoulli with respect to the product of Haar measures $\mu_m
\times \mu_k$.

The 1-sided Bernoullicity holds also for "most" toral extensions
of smooth perturbations $g$ of $f_A$, together with their
respective product measure $\mu_g \times \mu_k$.

\

\textbf{Acknowledgements:} This work was supported by CNCSIS - UEFISCDI, project PN II - IDEI 1191/2008.

\

\

\textbf{E-mail:}  Eugen.Mihailescu\@@imar.ro

Institute of Mathematics of the Romanian Academy, P. O. Box 1-764,
RO 014700, Bucharest, Romania.

Webpage: www.imar.ro/$\sim$mihailes


\begin{thebibliography}{99}

\bibitem{AS}
R. \ L. Adler and P. \ C. Shields, Skew products of Bernoulli shifts with rotations, Israel J. Math., \textbf{12}, 1972, 215-222.

\bibitem{CP}
Z.\ Coelho and W.\ Parry, Shift endomorphisms and compact Lie extensions, Bol. Soc. Bras. Mat., \textbf{29}, 1, 1998, 163-179.

\bibitem{HR}
C. \ Hoffman and D. \ Rudolph, Uniform endomorphisms which are isomorphic to a Bernoulli shift, Annals of Math., \textbf{156}, 2002, 79-101.

\bibitem{K}
Y. \ Katznelson, Ergodic automorphisms of $\mathbb T^n$ are Bernoulli shifts, Israel J. Math., \textbf{10}, 1971, 186-195.

\bibitem{KN}
H. \ Keynes and D.\ Newton, Ergodic measures for non-abelian compact group extensions, Compositio Math., \textbf{32}, 1976, 1278-1301.

\bibitem{KS}
I.\ Kornfeld and Y. \ Sinai, Chapters 1-3 in Dynamical Systems, Ergodic Theory and Applications, ed. Y. Sinai, Encyclopaedia of Mathematical Sciences, vol. 100, Springer Verlag, Berlin Heidelberg 2000.

\bibitem{L}
F. \ Ledrappier, Some properties of absolutely continuous
invariant measures on an interval, Ergodic Th. and Dynam. Syst.,
\textbf{1}, 1981, 77-93.

\bibitem{Ma}
R. \ Mane, Ergodic theory and differentiable dynamics, Springer
Verlag, 1987.

\bibitem{Ma-85}
R. \ Mane, On the Bernoulli property for rational maps, Ergodic Th. and Dynam. Syst., \textbf{5}, 71-88, 1985.

\bibitem{M-MZ}
E. \ Mihailescu, Unstable directions and fractal dimensions for a family of skew products with overlaps, Math. Zeitschrift, 2010, DOI 10.1007/s00209-010-0761-y.


\bibitem{M-DCDS06}
E. \ Mihailescu, Unstable manifolds and Holder structures
associated with noninvertible maps, Discrete and Cont. Dynam.
Syst. \textbf{14}, 3, 2006, 419-446.

\bibitem{MU-BLMS}
E. \ Mihailescu and M. \ Urbanski, Relations between stable
dimension and the preimage counting function on basic sets with
overlaps, Bull. London Math. Soc., \textbf{42}, 2010, 15-27.

\bibitem{OW}
D. \ Ornstein and B. \ Weiss, Statistical properties of chaotic
systems, Bull. AMS, \textbf{24}, no.1, 1991, 11-116.

\bibitem{Pa-96}
W. \ Parry, Automorphisms of the Bernoulli endomorphism and a class of skew products, Ergodic Th. and Dynam. Syst.,
\textbf{16}, 1996, 519-529.

\bibitem{Pa-97}
W. \ Parry, Skew products of shifts with a compact Lie group, J. London Math. Soc., \textbf{56}, 1997, 395-404.

\bibitem{PP}
W. \ Parry and M. \ Pollicott, Stability of mixing for toral extensions of hyperbolic systems, in "Dynamical systems and related topics", Tr. Mat. Inst. Steklov, 216, Nauka, Moscow, 1997, 354-363.


\bibitem{PW}
W.\ Parry and P.\ Walters, Endomorphisms of a Lebesgue space,
Bulletin AMS, \textbf{78}, 1972, 272-276.



\bibitem{R}
D. \ Rudolph, Classifying the isometric extensions of a Bernoulli shift, J. Analyse Math., \textbf{34}, 1978, 36-60.

\bibitem{Ru-exp}
D. \ Ruelle, The thermodynamic formalism for expanding maps,
Commun. in Math. Physics, \textbf{125}, 239-262, 1989.

\bibitem{S}
M. \ Shub, Endomorphisms of compact differentiable manifolds, American J. Math., \textbf{91}, no. 1, 1969, 175-199.

\bibitem{Wa}
P. \ Walters, An introduction to ergodic theory (2nd edition),
Springer New York, 2000.

\end{thebibliography}
\end{document}